\documentclass{amsart}
\usepackage{fontenc}
\usepackage{lmodern}
\usepackage{amsmath,amssymb,mathtools}
\usepackage{microtype,xurl,enumitem,needspace,etoolbox}
\usepackage[hidelinks]{hyperref}

\newtheorem{theorem}{Theorem}[section]
\newtheorem{proposition}[theorem]{Proposition}

\newtheorem{corollary}[theorem]{Corollary}
\theoremstyle{definition}

\newtheorem*{question}{Question}
\theoremstyle{remark}

\newcommand{\Z}{\mathbb Z}
\newcommand{\Q}{\mathbb Q}

\newcommand{\Zp}{\mathbb Z_p}
\newcommand{\Qp}{\mathbb Q_p}

\newcommand{\rhobar}{\overline\rho}
\newcommand{\bV}{\overline V}
\newcommand{\bS}{\overline S}
\newcommand{\bQ}{\overline Q}
\newcommand{\bB}{\overline B}
\newcommand{\eps}{\varepsilon}
\newcommand{\beps}{\overline\varepsilon}
\DeclareMathOperator{\GL}{GL}
\DeclareMathOperator{\Gal}{Gal}
\DeclareMathOperator{\Hom}{Hom}
\DeclareMathOperator{\End}{End}
\DeclareMathOperator{\Soc}{Soc}
\DeclareMathOperator{\tr}{tr}
\DeclareMathOperator{\Tr}{Tr}

\DeclareMathOperator{\charac}{char}
\numberwithin{equation}{section}

\title[Lifting mod $p$ Galois representations over local fields]
{Lifting mod $p$ Galois representations over local fields}
\author{Yufan Luo}
\address{Shanghai Institute for Mathematics and Interdisciplinary Sciences (SIMIS), Shanghai 200433, China}
\address{Research Institute of Intelligent Complex Systems, Fudan University, Shanghai 200433, China}

\subjclass[2020]{11F80}
\keywords{Galois representations, local fields, Witt vectors, lifting problems}

\begin{document}

\begin{abstract}
Let $K$ be a local field with absolute Galois group $G_{K}$ and let $k$ be a finite field of characteristic $p$. We prove that every continuous
representation $G_K\to\GL_n(k)$ lifts to a continuous representation
$G_K\to\GL_n(W(k))$ where $W(k)$ is the ring of infinite Witt vectors over $k$.
\end{abstract}

\maketitle
\tableofcontents

\section{Introduction}\label{sec:introduction}

Let $p$ be a prime and let $k$ be a finite field of characteristic $p$. Let $W(k)$ be the ring of infinite Witt vectors over $k$. Recall that a local field is a locally compact, non-discrete Hausdorff topological field. The problem of lifting residual Galois representations has a long history;
see, for instance, \cite{Boc03}. We consider the following question.

\begin{question}
Let $K$ be a local field with absolute Galois group $G_K$, and let
$n$ be a positive integer. Does every continuous representation $  \rhobar:G_K\longrightarrow\GL_n(k)$ admit a continuous lift to $W(k)$, i.e., a continuous representation $\widetilde{\rho}:G_K\longrightarrow\GL_n(W(k))$
whose reduction modulo $p$ is $\rhobar$?
\end{question}

The main theorem of this paper gives an affirmative answer to the above question.

\begin{theorem}\label{thm:main}
Let $K$ be a local field, let $p$ be any prime, and let $k$ be a finite
field of characteristic $p$. Then, for every integer $n\geq1$ and every
continuous representation $\rhobar:G_K\to\GL_n(k)$, there exists a
continuous representation $\rho:G_K\longrightarrow\GL_n(W(k))$ whose reduction modulo $p$ is $\rhobar$.
\end{theorem}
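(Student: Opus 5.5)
The plan is to split according to the type of $K$. In the archimedean case $G_K$ has order at most $2$, so a lift is just a matrix in $\GL_n(W(k))$ of order dividing $2$ reducing to $\rhobar(c)$. If $p\neq2$, the order of $\rhobar(c)$ is prime to $p$, so we lift the corresponding idempotent decomposition. If $p=2$, $\rhobar(c)$ is unipotent with Jordan blocks of size at most $2$, and each block $\begin{pmatrix}1&1\\0&1\end{pmatrix}$ lifts to $\begin{pmatrix}1&1\\0&-1\end{pmatrix}$, whose square is the identity.

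If $\charac K=p$, then $\mathrm{cd}_p(G_K)\le1$. Hence $H^2(G_K,\mathrm{ad}\,\rhobar)=0$, and the obstruction to lifting from $\GL_n(W(k)/p^m)$ to $\GL_n(W(k)/p^{m+1})$ vanishes at each step. Choosing lifts successively gives a compatible system, and hence a continuous $\rho$ into $\GL_n(W(k))=\varprojlim \GL_n(W(k)/p^m)$.

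Now let $K$ be non-archimedean of residue characteristic $\ell\neq p$; this includes the characteristic-$\ell$ local fields with $\ell\neq p$. The image of wild inertia $P_K$ is a finite $\ell$-group, of order prime to $p$. Its restriction therefore lifts uniquely up to conjugacy, with image the Teichmüller-type lift $\rho_P$. We then decompose $\rhobar$ according to $P$-isotypic pieces and reduce, by Clifford theory, to lifting a representation of an open subgroup on a $\rho_P$-isotypic part twisted by a projective representation. After this reduction everything happens on the tame quotient, which is topologically generated by $\sigma,\tau$ with $\sigma\tau\sigma^{-1}=\tau^q$. The task becomes finding $T\in\GL_n(W(k))$ lifting $\rhobar(\tau)$, with $p$-power-unipotent part of finite order, together with $\Sigma$ lifting $\rhobar(\sigma)$, satisfying $\Sigma T\Sigma^{-1}=T^q$. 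The plan is to do this explicitly. Put $\rhobar(\tau)$ in a Jordan-type normal form, write its unipotent part as $\exp$ of a nilpotent $N$ over $k$, and lift $N$ to a nilpotent matrix over $W(k)$ compatible with the semisimple part. Then conjugation by the lifted $\Sigma$ must send $N$ to $qN$. This relation can be solved over $W(k)$ because the $q$-eigenspace structure of $\mathrm{ad}$ on nilpotent matrices lifts; this is the classical ``$\ell\neq p$'' argument, adapted so that no ramified extension of $W(k)$ is needed.

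The main case is $K/\Qp$ finite, where $H^2(G_K,\mathrm{ad}\,\rhobar)\cong H^0(G_K,\mathrm{ad}\,\rhobar^\vee(1))^\vee$ can be nonzero, so naive successive lifting fails. Here we would induct on $n$ using a filtration of $\rhobar$ by irreducible constituents. Each irreducible $\rhobar_i$ is induced from a tame character of an unramified extension, so it lifts to $W(k)$ via Teichmüller lifts of tame characters. We may further twist each lift by an unramified character with values in $1+pW(k)$, and by powers of the cyclotomic character reduced to land in $W(k)$, i.e., multiplied by the inverse Teichmüller of its reduction. The key step is extending lifts: given lifts $\rho_1$ of the sub and $\rho_2$ of the quotient of a two-step extension with class $c\in H^1(G_K,\Hom(\rhobar_2,\rhobar_1))$, we want the twists chosen so that $H^2(G_K,\Hom(\rho_2,\rho_1))$ is torsion-free in the relevant sense, i.e.\ so that $H^1(G_K,\Hom(\rho_2,\rho_1))\to H^1(G_K,\Hom(\rhobar_2,\rhobar_1))$ is surjective. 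Via Tate duality and the Euler characteristic, this amounts to arranging $H^0(G_K,\Hom(\rho_1,\rho_2)(1)\otimes\Q_p/\Z_p)$ to have no $p$-torsion beyond what reduces compatibly. A generic unramified twist, e.g.\ by $1+p$ on Frobenius, kills the problematic invariants. For longer filtrations we would work in the parabolic subgroup stabilizing the flag and lift block by block, choosing twists so that all relevant $H^2$'s of the graded Hom-lattices vanish simultaneously.

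I expect this last step to be the main obstacle. The twists must be chosen consistently across all pairs $(i,j)$ of constituents, including when the same constituent appears several times (where twisting cannot separate $\rhobar_i$ from itself). They must also yield a genuine lift of the whole non-split extension structure, not merely of each two-step piece, since higher Massey-type obstructions could intervene.
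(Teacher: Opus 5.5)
Your archimedean and characteristic-$p$ arguments are fine; the latter is B\"ockle's $\mathrm{cd}_p(G_K)\le 1$ argument, which the paper simply cites. The case $\charac K\neq p$, however, has a genuine gap at exactly the step you flag, and the mechanism you propose there points the wrong way.

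Put $T=\Hom(\rho_2,\rho_1)$, $N=\Hom(\rho_1,\rho_2(1))$ and $D=N[1/p]/N$. Duality gives $H^2(G_K,T)\simeq\Hom(D^{G_K},W(k)[1/p]/W(k))$, so $H^1(G_K,T)\to H^1(G_K,T/pT)$ is onto iff $D^{G_K}$ is $p$-divisible. A generic unramified twist kills $N[1/p]^{G_K}$ and so makes $D^{G_K}$ finite. But $D^{G_K}[p]=(N/pN)^{G_K}=\Hom_{G_K}(\rhobar_1,\rhobar_2(1))$ is unaffected by twists $\equiv1\bmod p$, and a finite $p$-divisible group is $0$. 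Hence twisting works only when $\Hom_{G_K}(\rhobar_1,\rhobar_2(1))=0$. By local duality that is exactly the case $H^2(G_K,\Hom(\rhobar_2,\rhobar_1))=0$, where there was no obstruction in the first place.

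Concretely, take $K=\Q_p$ and $\rhobar=\bigl(\begin{smallmatrix}\beps&*\\0&1\end{smallmatrix}\bigr)$. With $\rho_1=\eps$, $\rho_2=1$ you get $H^2(G_K,W(k)(1))\simeq W(k)$, which is torsion-free, so every class lifts (Kummer theory). Twisting $\rho_1$ by the unramified $\chi$ with $\chi(\mathrm{Frob})=1+p$ gives $D^{G_K}=p^{-1}W(k)/W(k)$. Then $H^2(G_K,T)[p]\neq0$, and the image of $H^1(G_K,T)$ in $H^1(G_K,T/pT)$ has codimension one.

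The missing idea is the opposite of genericity: \emph{match} the lifts. Take $\rho_1=\rho_2(1)$ whenever $\rhobar_1\simeq\rhobar_2(1)$, using lifts $M$ of irreducibles with $\End_{G_K}(M)\to\End_{G_K}(\rhobar)$ surjective (Isaacs--Schmid, since finite quotients of $G_K$ are solvable). Then $\Hom_{G_K}(\rho_1,\rho_2(1))\to\Hom_{G_K}(\rhobar_1,\rhobar_2(1))$ is surjective, which forces $D^{G_K}$ to be $p$-divisible (Proposition~\ref{prop:extension}). This also handles repeated constituents, which no twist can separate.

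The other two pieces are also not yet arguments. For longer filtrations, block-by-block lifting in a parabolic leaves the Massey-type obstructions you mention unresolved. The paper avoids them by a two-step induction along the socle. It lifts $\bQ=\bV/\Soc(\bV)$ as a whole by induction, with $\bB=\Soc(\bQ)$ lifted as a sum of chosen lifts $M'_j$. It then lifts each constituent of $\Soc(\bV)$ isomorphic to some $\rhobar'_j(1)$ by $M'_j(1)$. Every $G_K$-map from the semisimple $\Soc(\bV)$ to $\bQ(1)$ lands in $\bB(1)$, so this matching yields the Hom-surjectivity for the pair $(S,Q)$, and the entire extension class then lifts at once.

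Your residue-characteristic-$\ell\neq p$ sketch is unsupported:
\begin{itemize}
\item $\exp(N)$ is undefined over $k$ once $N^p\neq0$.
\item Solving $\Sigma N\Sigma^{-1}=qN$ over $W(k)$ without enlarging coefficients is asserted rather than proved; prior $W(k)$-results in this setting need extra hypotheses such as $p>n$.
\item The Clifford descent over $W(k)$ is not addressed.
\end{itemize}
It is also unnecessary. The same $H^2$ obstruction arises there (e.g.\ when $\mu_p\subset K$), and the duality argument above works verbatim for every nonarchimedean $K$ with $\charac K\ne p$.
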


\subsection{Relation to previous work}

The local lifting problem has been studied both over a prescribed
coefficient ring and after extending the coefficients. To distinguish
these questions, we put $E_0=W(k)[1/p]$. A characteristic-zero lift of $\rhobar$ means a finite extension $E/E_0$
and a continuous representation
\[
 \rho_E:G_K\longrightarrow\GL_n(\mathcal O_E),\qquad
 \rho_E\bmod\mathfrak m_E\simeq\rhobar\otimes_k k_E,
\]
where $\mathcal O_E$, $\mathfrak m_E$, and $k_E$ denote the ring of integers,
maximal ideal, and residue field of $E$. This formulation allows
$E/E_0$ to be ramified and $k_E$ to be larger than $k$.

B\"ockle proved that every continuous residual representation of the
absolute Galois group of a local field lifts to
$W_2(k)=W(k)/p^2W(k)$ in \cite[Theorem~1.1]{Boc03}. Furthermore, B\"ockle proved the full $W(k)$-lifting statement when the local field $K$ has characteristic $p$ in \cite[Theorem~1.7]{Boc03}. The archimedean cases are also already treated by B\"ockle in \cite[Lemma~2.3]{Boc03}.
He also proved that a lift over $W(k)$ exists whenever the finite
Galois extension of $K$ cut out by $\rhobar$ contains no primitive
$p$-th root of unity; see \cite[Proposition~1.4]{Boc03}.

For a nonarchimedean local field of residue characteristic
$\ell\ne p$, B\"ockle gave a $W(k)$-lift whenever
the residual inertia image is a $p$-group; see
\cite[Theorem~1.3]{Boc03}. When, in addition, $\charac K=0$,
characteristic-zero liftability for arbitrary residual representations
was established by Choi in \cite[Theorem~1.1]{Cho17}.
See also the minimally ramified lifting results of Clozel, Harris,
and Taylor \cite[Corollary~2.4.21]{CHT08}. For any nonarchimedean local field of residue characteristic
$\ell\ne p$, Booher, Cotner, and Tang's results give a lift over
$W(k)$ for every $\rhobar$ when $p>n$; see
\cite[Theorem~1.1 and Remark~6.21]{BCT24}, applied to
$\GL_n$ over $W(k)$.

For finite extensions $K$ of the field $\Q_p$ of $p$-adic numbers, Emerton and Gee proved that every
continuous representation $\rhobar:G_K\to\GL_n(k)$ admits a
characteristic-zero lift over $\mathcal O_E$ for some finite extension
$E/E_0$; see \cite[Theorem~6.4.4]{EG}. B\"ockle, Iyengar, and Pa\v{s}k\={u}nas gave another proof of
characteristic-zero liftability after a finite extension of coefficients
in the $p$-adic case using local Galois deformation rings;
see \cite[Corollary~3.45]{BIP}.

A different approach retaining $W(k)$-coefficients is due to
Conti, Demarche, and Florence. For a residual representation
admitting a unique complete invariant flag, they construct
compatible lifts to all $W_r(k)=W(k)/p^r W(k)$; see
\cite[Corollary~7.25(1) and Remark~7.28]{CDF24}.

In this paper, we \emph{completely} resolve the lifting problem for local fields by constructing lifts \emph{over the original Witt ring $W(k)$}. Specifically, we establish $W(k)$-liftability for all nonarchimedean local fields $K$ of $\charac K \ne p$, for arbitrary $p$, $n$, and $\bar{\rho}$, without extending $E_0$ or imposing any flag condition. Furthermore, our approach treats all residue characteristics uniformly and is independent of previous lifting results.

\subsection{Sketch of the proof}
As noted above, it suffices to consider nonarchimedean local fields $K$ with $\charac K\ne p$. Our first observation is that every finite quotient of $G_K$ is solvable. By a lifting theorem for irreducible mod $p$ representations of finite
solvable groups due to Isaacs and Schmid \cite{Isa74,Sch83}, every
irreducible continuous representation $\rhobar:G_K\to\GL_n(k)$ admits
a lift $M$ over $W(k)$ for which
\begin{equation}\label{eq:intro-end}
 \End_{G_K}(M)\longrightarrow\End_{G_K}(\rhobar)
 \quad\text{is surjective}.
\end{equation}
The key to the proof is the following criterion from a generalized Tate local duality for Galois cohomology: if $A$ and $B$ are finite free $W(k)$-modules with continuous $W(k)$-linear
$G_K$-actions for the $p$-adic topology and
\begin{equation}\label{eq:intro-hom}
 \Hom_{G_K}(A,B(1))\longrightarrow
 \Hom_{G_K}(A/pA,(B/pB)(1))
 \quad\text{is surjective},
\end{equation}
then every extension of $B/pB$ by $A/pA$ with continuous
$G_K$-action is obtained by reducing an exact sequence
\[
 0\longrightarrow A\longrightarrow V\longrightarrow B
 \longrightarrow0
\]
of finite free $W(k)$-modules with continuous $W(k)$-linear
$G_K$-actions and $G_K$-equivariant maps.

To apply this criterion, we proceed by induction on the dimension of the given
residual representation $\bV$ of $G_{K}$. The \emph{socle} $\Soc(\bV)$ of $\bV$ is
the sum of all irreducible $G_K$-subrepresentations
of $\bV$. We carry out the induction with the additional requirement that
$V$ contain a $G_K$-stable submodule $S$ lifting the inclusion
$\Soc(\bV)\hookrightarrow\bV$. Here we require $S$ to be obtained
by choosing a lift satisfying \eqref{eq:intro-end} for each
irreducible constituent of $\Soc(\bV)$ and retaining its multiplicity.
The semisimple case follows by taking direct sums of such lifts and hence it suffices to consider the non-semisimple case.

For the induction step, we put
\[
 \bS=\Soc(\bV),\qquad \bQ=\bV/\bS,\qquad
 \bB=\Soc(\bQ).
\]
Since $\dim_k\bQ<\dim_k\bV$, induction provides a lift $Q$
containing a submodule $B$ that lifts the inclusion
$\bB\hookrightarrow\bQ$, with $B$ a direct sum of lifts as above.
We choose the lift $S$ of $\bS$ to match these choices: if an
irreducible constituent $\sigma$ of $\bS$ is isomorphic to
$\tau(1)$ for a constituent $\tau$ of $\bB$, and $M_\tau$ is
the chosen lift of $\tau$ in $B$, we lift $\sigma$ by $M_\tau(1)$.
For the remaining constituents, we choose lifts satisfying
\eqref{eq:intro-end}.

Since $\bS$ is semisimple, every $G_K$-equivariant map $f:\bS\to\bQ(1)$ has image in $\Soc(\bQ(1))=\bB(1)$.
Write this map in blocks relative to the irreducible decompositions
of $\bS$ and $\bB(1)$. Blocks between nonisomorphic irreducible
types are zero. Each remaining block is a matrix of endomorphisms
of a single irreducible representation, and our choices use the
same $W(k)$-lift for its source and target summands.
The surjectivity in \eqref{eq:intro-end}, which is preserved by
Tate twists, therefore allows us to lift each matrix entry.
Assembling these lifts gives a map $S\to B(1)\hookrightarrow Q(1)$
reducing to $f$. Thus \eqref{eq:intro-hom} holds with
$(A,B)=(S,Q)$, and the criterion lifts
$0\to\bS\to\bV\to\bQ\to0$ to
$0\to S\to V\to Q\to0$, completing the induction.

\subsection{Notation and conventions}

Throughout this paper, we fix a finite field $k$ of characteristic $p$. Let $\Q_p$ be the field of $p$-adic numbers and $\Zp$ be the ring of integers of $\Q_p$. For any field $K$, we write $\charac K $ for the characteristic of $K$ and $G_K=\Gal(K^{\mathrm s}/K)$ for the absolute Galois group of $K$, where $K^{\mathrm s}$ is a separable
closure of $K$. For a group $G$ and $R=k$ or $W(k)$, a representation
$\rho:G\to\GL_n(R)$ equips $R^n$ with a left $R[G]$-module
structure where $R[G]$ is the group ring. We identify a representation with this module when convenient. Finite-dimensional $k$-vector spaces
are equipped with the discrete topology, and finite free $W(k)$-modules
with the $p$-adic topology.
 
 If $K$ is a nonarchimedean local field with $\charac K\ne p$, then we let
$\eps:G_K\to\Zp^\times\subset W(k)^\times$ denote the $p$-adic
cyclotomic character of $G_{K}$, with reduction $\beps$. For a finite free
$W(k)$-module $M$ with a continuous $W(k)$-linear $G_K$-action,
a $k[G_K]$-module $\rhobar$, and $r\in\Z$, the \emph{Tate twists}
$M(r)$ and $\rhobar(r)$ have the same underlying modules, with
$g\in G_K$ acting by $m\mapsto\eps(g)^r gm$ and
$v\mapsto\beps(g)^r gv$, respectively.

\section{Lifting irreducible mod $p$ representations}\label{sec:simple}

Our first input is the following consequence of a lifting theorem
for irreducible mod $p$ representations of finite solvable groups
due to Isaacs and Schmid  \cite{Isa74,Sch83}.

\begin{proposition}\label{prop:finite-simple}
Let $H$ be a finite solvable group, and let $\rhobar$ be an
irreducible $k[H]$-module which is finite-dimensional over $k$. Then there is a finite free $W(k)$-module $M$ lifting $\rhobar$, equipped with a $W(k)$-linear $H$-action, such that the natural reduction map
\[
 \End_{W(k)[H]}(M)\longrightarrow\End_{k[H]}(\rhobar)
\]
is surjective.
\end{proposition}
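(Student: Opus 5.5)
Fix an irreducible $k[H]$-module $\rhobar$ and put $D=\End_{k[H]}(\rhobar)$. Since $k$ is finite, Schur's lemma and Wedderburn's little theorem make $D$ a finite field $k'\supseteq k$ of degree $d$. My plan is to use the Fong--Swan--Isaacs theorem in Schmid's refined form. For a $p$-solvable (in particular solvable) group, an absolutely irreducible module over a splitting field lifts to characteristic zero. Isaacs' canonical lift (the $\pi$-special / $B_{p'}$-character construction) is rational over the field of values of the Brauer character, and Schmid shows the lift can be realized over $W(\cdot)$ of the field of definition of the Brauer character, with no extra Schur index. This is the step I rely on: the theorem of \cite{Isa74,Sch83} is an input, not something I reprove.

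The steps would be as follows.
\begin{enumerate}[label=(\arabic*)]
\item View $\rhobar$ as a $k'[H]$-module via $D=k'$. It is then absolutely irreducible, and its character is defined over $k'$; the $k$-module $\rhobar$ is the restriction of scalars of this $k'$-module, which I write $\rhobar'$.
\item By Isaacs--Schmid, choose a $W(k')[H]$-lattice $M'$ lifting $\rhobar'$. It is free of rank $n/d$ over $W(k')$.
\item Set $M=M'$, regarded as a $W(k)$-module by restriction of scalars. It is finite free of rank $n$ over $W(k)$, carries a $W(k)$-linear $H$-action, and reduces to $\rhobar'$ viewed over $k$, which is $\rhobar$.
\item The action of $W(k')$ on $M'$ commutes with $H$, giving a ring map $W(k')\to\End_{W(k)[H]}(M)$. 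Composing with reduction gives the reduction map $W(k')\to k'$, which is surjective. Hence the reduction map $\End_{W(k)[H]}(M)\to\End_{k[H]}(\rhobar)=k'$ is surjective.
\end{enumerate}

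The main obstacle is step (2): getting the lift over exactly $W(k')$ rather than over a ramified extension or one with larger residue field. Fong--Swan alone gives only some $\mathcal O_E$. Two facts are needed. First, Isaacs' canonical lift has character values in $\Q(\zeta_m)$ with $p\nmid m$ and lying in the unramified field generated by the Brauer character values. Second, its Schur index over $W(k')[1/p]$ is $1$. The second holds because $p$-adic fields have trivial Brauer group contributions for such characters: by Schmid, a character with irreducible reduction has Schur index $1$ over an unramified $p$-adic field. Once the representation is defined over $E'=W(k')[1/p]$, any $H$-stable lattice reduces to a module with the same Brauer character as $\rhobar'$. Since $\rhobar'$ is irreducible, Brauer--Nesbitt shows the reduction is isomorphic to $\rhobar'$, so the lattice lifts $\rhobar'$.

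I expect to cite this statement directly: every absolutely irreducible $k'[H]$-module of a solvable group $H$ lifts to $W(k')$ when $k'$ contains its Brauer character values. The rest is the formal restriction-of-scalars argument in steps (3) and (4).
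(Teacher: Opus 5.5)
Your proposal is correct, but it gets the endomorphism surjectivity by a different mechanism than the paper.

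\textbf{The paper's route.} The paper applies Schmid's theorem directly over $W(k)$ to $\rhobar$ itself, which need not be absolutely irreducible. It uses the refined content of that theorem:
\begin{itemize}
\item the characteristic-zero lift $V$ has $\End_{F[H]}(V)$ unramified over $F=W(k)[1/p]$, with residue field $E=\End_{k[H]}(\rhobar)$;
\item Schmid's Lemma~2 identifies $A=\End_{W(k)[H]}(M)$ with the ring of integers of that field.
\end{itemize}
Surjectivity then follows from the injection $A/pA\hookrightarrow E$ together with the count $\dim_k A/pA=[E:k]$.

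\textbf{Your route.} You regard $\rhobar$ as an absolutely irreducible module over $k'=E$. You then need only the bare existence of a $W(k')$-lattice lifting it. This is the absolutely irreducible case of the same Isaacs--Schmid input, and you explain it sensibly: the canonical lift has unramified field of values and trivial Schur index, and Brauer--Nesbitt identifies the reduction. Finally you restrict scalars to $W(k)$. Surjectivity is then formal, because $W(k')$ acts by $H$-equivariant scalars whose reductions exhaust $k'=E$.

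\textbf{Comparison.} Your restriction-of-scalars trick replaces the structural statement about $\End_{F[H]}(V)$ and Lemma~2. The price is that you invoke the lifting theorem over the larger ring $W(k')$. The paper's route quotes Schmid's result in exactly the form needed over $W(k)$ and avoids changing coefficients.

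The two constructions produce essentially the same object. The Galois conjugates of your $k'$-module are pairwise nonisomorphic, since $\End_{k'[H]}(k'\otimes_k\rhobar)\simeq k'\otimes_k k'$ is commutative. From this one checks that your $M$ satisfies $\End_{W(k)[H]}(M)=W(k')$, which is precisely the ring of integers appearing in the paper's proof.
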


\begin{proof}
Note that $E:=\End_{k[H]}(\rhobar)$ is a finite field by
Schur's lemma and Wedderburn's theorem.
Let $F=W(k)[1/p]$. By \cite[Theorem, p.~461]{Sch83},
there are an $F[H]$-module $V$ and an $H$-stable
$W(k)$-lattice $M\subset V$ with $M/pM\simeq\rhobar$
such that $\End_{F[H]}(V)$ is an unramified extension
of $F$ with residue field $E$.
By \cite[Lemma~2]{Sch83},
$A:=\End_{W(k)[H]}(M)$ is the ring of integers of this extension.

Since the natural reduction map $A\to E$ has kernel $pA$,
it induces an injection $A/pA\hookrightarrow E$.
Since $\End_{F[H]}(V)$ is unramified over $F$ with residue
field $E$, the uniformizer $p$ of $W(k)$ remains a uniformizer
of $A$. Hence $pA$ is the maximal ideal of $A$ and
$\dim_k(A/pA)=[E:k]=\dim_k E$.
This injection is therefore an isomorphism, and the natural
reduction map $A\to E$ is surjective.
\end{proof}

As a corollary, we obtain the following result.
\begin{corollary}\label{cor:local-simple}
Let $K$ be a nonarchimedean local field, and let $\rhobar:G_{K}\to \GL(\bV)$ be an irreducible continuous representation of $G_{K}$ on the finite-dimensional $k$-vector space $\bV$. Then $\rhobar$ admits a lift to a finite free $W(k)$-module $M$ with a continuous $W(k)$-linear
$G_K$-action such that the natural reduction map
\[
 \End_{G_K}(M)\longrightarrow\End_{G_K}(\rhobar)
\]
is surjective. When $\charac K\ne p$, $M(1)$ lifts $\rhobar(1)$,
and the natural reduction map
\[
 \End_{G_K}(M(1))\longrightarrow\End_{G_K}(\rhobar(1))
\]
is also surjective.
\end{corollary}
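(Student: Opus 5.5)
The plan is to reduce to Proposition~\ref{prop:finite-simple} by factoring $\rhobar$ through a finite quotient of $G_K$. Since $\bV$ is finite and carries the discrete topology, the kernel $N=\ker\rhobar$ is an open normal subgroup of $G_K$. So $\rhobar$ factors through the finite group $H=G_K/N=\Gal(L/K)$, where $L$ is the finite Galois extension of $K$ cut out by $\rhobar$. Irreducibility as a $G_K$-module is the same as irreducibility as an $H$-module.

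The first step, and the only one with real content, is to check that $H$ is solvable. For a finite Galois extension $L/K$ of nonarchimedean local fields, use the ramification filtration $\Gal(L/K)\supseteq I\supseteq P$. Here the wild inertia $P$ is a pro-$\ell$-group, hence a finite $\ell$-group and nilpotent, where $\ell$ is the residue characteristic. The quotient $I/P$ is cyclic, since it embeds into the multiplicative group of the residue field of $L$. The quotient $H/I$ is the Galois group of a finite extension of finite fields, hence cyclic. So $H$ is solvable, and this works in every characteristic of $K$.

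The second step is to apply Proposition~\ref{prop:finite-simple} to $H$ and $\rhobar$. This gives a finite free $W(k)$-module $M$ with a $W(k)$-linear $H$-action lifting $\rhobar$, such that $\End_{W(k)[H]}(M)\to\End_{k[H]}(\rhobar)$ is surjective. Inflate $M$ to $G_K$. The action is continuous for the $p$-adic topology because $N$ acts trivially and is open. Since $N$ acts trivially on both $M$ and $\rhobar$, we have $\End_{G_K}(M)=\End_{W(k)[H]}(M)$ and $\End_{G_K}(\rhobar)=\End_{k[H]}(\rhobar)$, which gives the first surjectivity.

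The third step handles the twist when $\charac K\ne p$. The modules $M(1)$ and $M$ have the same underlying module, and the twisted action is $g\cdot m=\eps(g)gm$. Because $\eps(g)$ is a central scalar, a $W(k)$-linear endomorphism commutes with the twisted action exactly when it commutes with the original one. Hence $\End_{G_K}(M(1))=\End_{G_K}(M)$ as subsets of $\End_{W(k)}(M)$, and likewise $\End_{G_K}(\rhobar(1))=\End_{G_K}(\rhobar)$. Reduction mod $p$ commutes with these identifications, and $M(1)/pM(1)=\rhobar(1)$ because $\eps$ reduces to $\beps$. Finally, $M(1)$ is continuous since $\eps$ is continuous. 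So the second surjectivity is literally the first one.

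The main obstacle is the solvability of $\Gal(L/K)$; everything else is formal bookkeeping.
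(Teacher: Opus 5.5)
Your proposal is correct and follows essentially the same route as the paper's proof. Both arguments factor $\rhobar$ through its finite image $H=\Gal(L/K)$, prove $H$ solvable via the filtration by inertia and wild inertia, and apply Proposition~\ref{prop:finite-simple} and inflate. Both then note that twisting by the central scalar $\eps(g)$ leaves the endomorphism rings unchanged.
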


\begin{proof}
Let $H$ be the image of $\rhobar$. Then it is the Galois group of a finite Galois extension of $K$. If $P\subset I\subset H$ are its wild
inertia and inertia groups and $\ell$ is the residue characteristic,
then $P$ is an $\ell$-group, $I/P$ is cyclic, and $H/I$ is cyclic; see \cite[Chapter~IV, Section~2]{SerreLF}. Thus $H$ is solvable.
Apply Proposition~\ref{prop:finite-simple} to $H$ and inflate the
lift $M$ to $G_K$. The surjection $G_K\to H$ gives
$\End_{G_K}(M)=\End_{W(k)[H]}(M)$ and
$\End_{G_K}(\rhobar)=\End_{k[H]}(\rhobar)$.
The reduction map is unchanged and hence remains surjective.
When $\charac K\ne p$, it follows from $\eps\bmod p=\beps$ that
$M(1)/pM(1)\simeq\rhobar(1)$ as $k[G_K]$-modules.
Since twisting multiplies each group operator by an invertible scalar,
$\End_{G_K}(M(1))=\End_{G_K}(M)$ and
$\End_{G_K}(\rhobar(1))=\End_{G_K}(\rhobar)$. Since both reduction maps send $f$ to $f\bmod p$, the natural reduction map $\End_{G_K}(M(1))\longrightarrow\End_{G_K}(\rhobar(1))$ is also surjective.
\end{proof}

\section{A generalized Tate local duality and an extension criterion}\label{sec:duality}

In this section, $K$ is a nonarchimedean local field with
$\charac K\ne p$, and we put $\mathcal O=W(k)$ and
$F=\mathcal O[1/p]$.

For a profinite group $G$ and a topological $\mathcal O$-module $M$
with a continuous $\mathcal O$-linear $G$-action, let
$C^q(G,M)$ be the $\mathcal O$-module of continuous maps
$G^q\to M$. The usual group cohomology differential makes
$C^\bullet(G,M)$ a complex and its cohomology is the
\emph{continuous cohomology} of $G$ with coefficients in $M$, denoted by
\[
 H^q(G,M)=H^q(C^\bullet(G,M)).
\]
When $M$ is discrete, this agrees with the usual continuous group
cohomology. In degree one,
$H^1(G,M)=Z^1(G,M)/B^1(G,M)$, where $Z^1(G,M)$ consists of
continuous maps $c:G\to M$ satisfying
\[
 c(gh)=c(g)+g c(h)\qquad(g,h\in G),
\]
and $B^1(G,M)$ consists of the maps $g\mapsto gm-m$ with
$m\in M$. 

Now we consider the case of $G=G_{K}$. For a finite free $\mathcal O$-module $T$ with a continuous
$\mathcal O$-linear $G_K$-action for the $p$-adic topology, there are
canonical isomorphisms
\begin{equation}\label{eq:continuous-cohomology-limit}
 H^q(G_K,T)\simeq\varprojlim_r H^q(G_K,T/p^rT)
 \qquad(q\geq0),
\end{equation}
by \cite[Lemma~4.2.2, pp.~96--97]{Nek06}, since the cohomology
groups of finite discrete $p$-primary $G_K$-modules are finite by
\cite[Chapter~I, Theorem~2.1]{Mil06}.
We give $F/\mathcal O$ the discrete topology and trivial
$G_K$-action, and put
\[
 D_T=\Hom_{\mathrm{cts},\mathcal O}(T,F/\mathcal O)(1).
\]
We give $D_T$ the discrete topology and the action
\[
 (g\cdot f)(t)=\eps(g)f(g^{-1}t).
\]
Every continuous homomorphism $T\to F/\mathcal O$ factors through
$T/p^rT$ for some $r$. Thus $D_T$ is the union of the finite
modules $\Hom_{\mathcal O}(T/p^rT,F/\mathcal O)(1)$, each with a
continuous $G_K$-action, and its discrete $G_K$-action is continuous.
The following result is a generalization of Tate local duality
from finite coefficients to $\mathcal O$ coefficients.

\begin{proposition}[Tate--Nekov\'a\v{r}]\label{prop:integral-local-duality}
Let $T$ be a finite free $\mathcal O$-module with a continuous
$\mathcal O$-linear $G_K$-action for the $p$-adic topology.
Then there is an isomorphism of $\mathcal O$-modules
\[
 H^2(G_K,T)\xrightarrow{\sim}
 \Hom_{\mathcal O}(D_T^{G_K},F/\mathcal O).
\]
\end{proposition}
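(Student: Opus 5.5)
The plan is to deduce the statement from classical Tate local duality for finite modules, applied level by level, and then pass to the limit using \eqref{eq:continuous-cohomology-limit}. For $r\geq1$ put $T_r=T/p^rT$, a finite discrete $p$-primary $G_K$-module. Since $\charac K\ne p$, Tate local duality \cite[Chapter~I, Corollary~2.3]{Mil06} gives a perfect pairing
\[
 H^2(G_K,T_r)\times H^0(G_K,T_r^\vee(1))\longrightarrow H^2(G_K,\mu_{p^r})\simeq\Z/p^r,
\]
where $T_r^\vee=\Hom(T_r,\Q_p/\Z_p)$. The first step is to identify $T_r^\vee(1)$ with $D_r:=\Hom_{\mathcal O}(T_r,F/\mathcal O)(1)$. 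This uses the trace: $\Tr_{W(k)/\Zp}$ induces an $\mathcal O$-linear isomorphism $\Hom_{\mathcal O}(T_r,F/\mathcal O)\simeq\Hom_{\Zp}(T_r,\Q_p/\Zp)$, because $W(k)$ is unramified over $\Zp$, so its inverse different is trivial. The isomorphism is $G_K$-equivariant, since $G_K$ acts $\mathcal O$-linearly. We then obtain isomorphisms
\[
 H^2(G_K,T_r)\xrightarrow{\sim}\Hom_{\mathcal O}(D_r^{G_K},F/\mathcal O),
\]
and these are $\mathcal O$-linear, using again that the pairing is compatible with the $\mathcal O$-action and the trace identification.

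The second step is compatibility in $r$. The reduction maps $T_{r+1}\to T_r$ dualize to the inclusions $D_r\hookrightarrow D_{r+1}$. By functoriality of cup products and the corestriction compatibility $H^2(G_K,\mu_{p^{r+1}})\to H^2(G_K,\mu_{p^r})$ (multiplication by $p$ on $\Z/p^{r+1}\hookrightarrow\Q/\Z$ under the invariant map), the transition map on $H^2$ corresponds to restriction of functionals along $D_r^{G_K}\hookrightarrow D_{r+1}^{G_K}$. This bookkeeping, checking that the sign and normalization of the invariant maps really make the square commute, is the main obstacle. I would handle it by viewing every pairing as landing in $\Q/\Z\supset\Z/p^r$ via $\mathrm{inv}$, where the compatibility is standard \cite[Chapter~I]{Mil06}.

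Finally, pass to the limit. By \eqref{eq:continuous-cohomology-limit}, $H^2(G_K,T)\simeq\varprojlim_r H^2(G_K,T_r)$. As noted above, $D_T=\bigcup_r D_r$, so $D_T^{G_K}=\bigcup_r D_r^{G_K}=\varinjlim_r D_r^{G_K}$. Since $\Hom_{\mathcal O}(-,F/\mathcal O)$ converts direct limits into inverse limits, we get
\[
 \varprojlim_r\Hom_{\mathcal O}(D_r^{G_K},F/\mathcal O)\simeq\Hom_{\mathcal O}(D_T^{G_K},F/\mathcal O).
\]
Composing this with the compatible isomorphisms from the first step yields the claimed isomorphism.
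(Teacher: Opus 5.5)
Your proposal is correct and follows essentially the same route as the paper: levelwise Tate duality for $T_r=T/p^rT$, the trace isomorphism $\Hom_{\mathcal O}(X,F/\mathcal O)\simeq\Hom_{\Zp}(X,\Qp/\Zp)$ (used for $X=T_r$ and, as your ``using again'' indicates, also for $X=D_r^{G_K}$), compatibility in $r$ by naturality, and passage to the limit via \eqref{eq:continuous-cohomology-limit} and $D_T^{G_K}=\bigcup_r D_r^{G_K}$. The one slip is the aside about a ``corestriction'' map $H^2(G_K,\mu_{p^{r+1}})\to H^2(G_K,\mu_{p^r})$: the transition compatibility actually comes from naturality of the cup product together with the inclusions $\mu_{p^r}\subset\mu_{p^{r+1}}$, i.e.\ working with values in $H^2(G_K,\mu_{p^\infty})\simeq\Qp/\Zp$, which is exactly your proposed fallback, so the argument is unaffected.
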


\begin{proof}
This is the case $R=\mathcal O$ and $i=2$ of
\cite[Section 5.2.10, p.~121; see also Theorem~5.2.6, p.~119]{Nek06}.
For completeness, we give a proof using the usual finite-coefficient local Tate duality.

For $r\geq1$, set
\[
 T_r=T/p^rT,\qquad
 D_r=\Hom_{\mathcal O}(T_r,F/\mathcal O)(1).
\]
The group $H^2(G_K,T_r)$ is finite by
\cite[Chapter~I, Theorem~2.1]{Mil06}.
Let $\tau:F/\mathcal O\to\Qp/\Zp$ be induced by $\Tr_{F/\Qp}$.
Since $F/\Qp$ is unramified, it induces an isomorphism
\[
 F/\mathcal O\xrightarrow{\sim}
 \Hom_{\Zp}(\mathcal O,\Qp/\Zp),
 \qquad y\longmapsto\bigl[a\longmapsto\tau(ay)\bigr].
\]
Thus, for every finite $\mathcal O$-module $X$, we have an isomorphism
\[
 \Hom_{\mathcal O}(X,F/\mathcal O)
 \xrightarrow{\sim}\Hom_{\Zp}(X,\Qp/\Zp),
 \qquad f\longmapsto\tau\circ f.
\]

Taking $X=T_r$ and applying the Tate twist yields the natural
$G_K$-equivariant isomorphism
\begin{equation}\label{eq:trace-tate-dual}
 D_r\xrightarrow{\sim}\Hom_{\Zp}(T_r,\Qp/\Zp)(1).
\end{equation}

Applying \cite[Chapter~I, Corollary~2.3]{Mil06} to $T_r$ and
$\Hom_{\Zp}(T_r,\Qp/\Zp)(1)$, and using
\eqref{eq:trace-tate-dual}, we obtain
\begin{equation}\label{eq:finite-tate-original}
 D_r^{G_K}\xrightarrow{\sim}
 \Hom_{\Zp}(H^2(G_K,T_r),\Qp/\Zp).
\end{equation}

Taking $X=D_r^{G_K}$ in the same construction gives an isomorphism
\begin{equation}\label{eq:trace-fixed-dual}
 \Hom_{\mathcal O}(D_r^{G_K},F/\mathcal O)
 \xrightarrow{\sim}\Hom_{\Zp}(D_r^{G_K},\Qp/\Zp),
 \qquad f\longmapsto\tau\circ f.
\end{equation}
Consequently, we have the following chain of isomorphisms:
\begin{equation}\label{eq:finite-integral-duality}
 \begin{aligned}
 H^2(G_K,T_r)
 &\simeq\Hom_{\Zp}\bigl(
   \Hom_{\Zp}(H^2(G_K,T_r),\Qp/\Zp),\Qp/\Zp\bigr)\\
 &\simeq\Hom_{\Zp}(D_r^{G_K},\Qp/\Zp)\\
 &\simeq\Hom_{\mathcal O}(D_r^{G_K},F/\mathcal O).
 \end{aligned}
\end{equation}
In \eqref{eq:finite-integral-duality}, the first isomorphism is
canonical biduality, given by $c\mapsto(\lambda\mapsto\lambda(c))$.
The second is obtained by applying $\Hom_{\Zp}(-,\Qp/\Zp)$ to
\eqref{eq:finite-tate-original}. The third is the inverse of
\eqref{eq:trace-fixed-dual}.

The quotient map $T_{r+1}\to T_r$ is dual to the inclusion
$D_r\hookrightarrow D_{r+1}$. By naturality of local Tate
duality, \eqref{eq:finite-integral-duality} is compatible with these
transition maps.
Since $D_T=\bigcup_rD_r$, we have
\begin{equation}\label{eq:dual-fixed-union}
 D_T^{G_K}=\bigcup_rD_r^{G_K}.
\end{equation}
We can therefore pass to the inverse limit:
\begin{equation}\label{eq:integral-duality-limit}
 \begin{aligned}
 H^2(G_K,T)
 &\simeq\varprojlim_r H^2(G_K,T_r)\\
 &\simeq\varprojlim_r\Hom_{\mathcal O}(D_r^{G_K},F/\mathcal O)\\
 &\simeq\Hom_{\mathcal O}(D_T^{G_K},F/\mathcal O).
 \end{aligned}
\end{equation}
In \eqref{eq:integral-duality-limit}, the first isomorphism is
\eqref{eq:continuous-cohomology-limit} with $q=2$. The second is
obtained by taking inverse limits in \eqref{eq:finite-integral-duality}.
The third follows from \eqref{eq:dual-fixed-union}, since an
$\mathcal O$-linear map from a union of an increasing sequence of
submodules is equivalent to a compatible family of maps from those
submodules.
\end{proof}

We now deduce the extension criterion.

\begin{proposition}\label{prop:extension}
Let $A$ and $B$ be finite free $\mathcal O$-modules with continuous
$\mathcal O$-linear $G_K$-actions. Suppose that the natural reduction map
\begin{equation}\label{eq:hom-surj}
 \Hom_{G_K}(A,B(1))\longrightarrow
 \Hom_{G_K}(A/pA,(B/pB)(1))
\end{equation}
is surjective.
Then every exact sequence of continuous $k[G_K]$-modules 
\[
 0\longrightarrow A/pA\longrightarrow\bV
 \longrightarrow B/pB\longrightarrow0
\]
is the reduction of an exact sequence of $\mathcal O[G_K]$-modules
\[
 0\longrightarrow A\longrightarrow V\longrightarrow B
 \longrightarrow0,
\]
where $V$ is a finite free $\mathcal O$-module with a continuous $\mathcal O$-linear $G_K$-action.
\end{proposition}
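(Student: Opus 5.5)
Set $T=\Hom_{\mathcal O}(B,A)$ with the conjugation action $(g\phi)=g\circ\phi\circ g^{-1}$. This is a finite free $\mathcal O$-module with continuous $G_K$-action. Extensions of $B$ by $A$ as $\mathcal O[G_K]$-modules that are split over $\mathcal O$ (automatic, since $B$ is free) are classified by $H^1(G_K,T)$. Concretely, a cocycle $c$ gives the module $V=A\oplus B$ with $g(a,b)=(ga+c(g)gb,\,gb)$. Likewise, extensions of $B/pB$ by $A/pA$ are classified by $H^1(G_K,T/pT)$, and reducing a lattice extension corresponds to the reduction map
\[
 H^1(G_K,T)\longrightarrow H^1(G_K,T/pT).
\]
So the proposition reduces to showing that this map is surjective. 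Continuity of the $V$ built from a continuous cocycle is clear.

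From the short exact sequence $0\to T\xrightarrow{p}T\to T/pT\to0$ we get the long exact sequence of continuous cohomology, which is available because everything is the inverse limit of finite-level cohomology as in \eqref{eq:continuous-cohomology-limit}. Surjectivity on $H^1$ is then equivalent to injectivity of multiplication by $p$ on $H^2(G_K,T)$, i.e. to $H^2(G_K,T)[p]=0$.

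By Proposition~\ref{prop:integral-local-duality}, $H^2(G_K,T)\simeq\Hom_{\mathcal O}(D_T^{G_K},F/\mathcal O)$. This Pontryagin-type dual has no $p$-torsion exactly when $D_T^{G_K}$ is $p$-divisible. To see this, note that a map $f$ killed by $p$ is a map $D_T^{G_K}/p\to k$, and $F/\mathcal O$ is injective, so $f=0$ iff $D_T^{G_K}/pD_T^{G_K}=0$. Now identify
\[
 D_T=\Hom_{\mathcal O}(T,F/\mathcal O)(1)\simeq \Hom_{\mathcal O}(A,B)\otimes_{\mathcal O}F/\mathcal O\,(1)=\Hom(A,B(1))\otimes F/\mathcal O,
\]
$G_K$-equivariantly. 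This uses the trace-free duality $\Hom_{\mathcal O}(B,A)^\vee\cong\Hom(A,B)$ via $\phi\otimes\psi\mapsto\tr(\phi\psi)$, and one must check that the twist and sign conventions match. Write $U=\Hom_{\mathcal O}(A,B(1))$, so $D_T=U\otimes F/\mathcal O=\bigcup_r p^{-r}U/U$.

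The main step is showing $D_T^{G_K}$ is divisible. The $p$-torsion of $D_T$ is $p^{-1}U/U\simeq U/pU=\Hom(A/pA,(B/pB)(1))$. The sequence $0\to U/pU\to D_T\xrightarrow{p}D_T\to0$ gives
\[
 0\to (U/pU)^{G_K}\to D_T^{G_K}\xrightarrow{p}D_T^{G_K}\to H^1(G_K,U/pU).
\]
This alone does not give divisibility, so I would argue level by level. Take $x\in (p^{-r}U/U)^{G_K}$; we want $y\in D_T^{G_K}$ with $py=x$. Use the hypothesis \eqref{eq:hom-surj}: $U^{G_K}\to(U/pU)^{G_K}$ is surjective. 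From $0\to U\xrightarrow{p}U\to U/pU\to0$, this is equivalent to $H^1(G_K,U)[p]=0$, i.e. $H^1(G_K,U)$ is $p$-torsion-free. Then $D_T^{G_K}=\varinjlim (p^{-r}U/U)^{G_K}$, and $0\to U\to p^{-r}U\to p^{-r}U/U\to0$ gives
\[
 (p^{-r}U/U)^{G_K}\simeq p^{-r}U^{G_K}/U^{G_K}\oplus_{\text{ext}} H^1(G_K,U)[p^r]=p^{-r}U^{G_K}/U^{G_K},
\]
since torsion-freeness kills the $H^1$ term. Hence $D_T^{G_K}=U^{G_K}\otimes F/\mathcal O$, which is divisible because $U^{G_K}$ is finite free.

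Therefore $H^2(G_K,T)$ is $p$-torsion-free, $H^1(G_K,T)\to H^1(G_K,T/pT)$ is surjective, and any cocycle class for $\bV$ lifts. Changing the lifted cocycle by a coboundary adjusts the identification, so the reduction of $V$ is isomorphic to $\bV$ compatibly with the maps from $A/pA$ and to $B/pB$. The main obstacle is bookkeeping: the $G_K$-equivariant identification $D_T\simeq U\otimes F/\mathcal O$ with the correct twist, and the compatibility of the continuous long exact sequences, which I would handle at finite levels $T/p^r$ and pass to limits using the finiteness from \cite{Mil06}.
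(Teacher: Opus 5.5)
Your proposal is correct and follows the paper's proof: the same classification of extensions by $H^1(G_K,T)$ with $T=\Hom_{\mathcal O}(B,A)$, the same reduction to $H^2(G_K,T)[p]=0$, the duality of Proposition~\ref{prop:integral-local-duality}, and the same trace-pairing identification $D_T\simeq U[1/p]/U$ with $U=\Hom_{\mathcal O}(A,B(1))$. The only cosmetic difference is how you get $D_T^{G_K}=U^{G_K}[1/p]/U^{G_K}$: you deduce it from the $p$-torsion-freeness of $H^1(G_K,U)$ using the sequences $0\to U\xrightarrow{p}U\to U/pU\to0$ and $0\to U\to p^{-r}U\to p^{-r}U/U\to0$, whereas the paper does the same diagram chase by hand, as an induction on the exponent $r$ of the denominator $p^{-r}$.
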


\begin{proof}
Put $T=\Hom_{\mathcal O}(B,A)$,
with the $p$-adic topology and $G_K$-action $(g\cdot f)(b)=g(f(g^{-1}b))$ for any $g\in G_K, f\in \Hom_{\mathcal O}(B,A),b\in B $. Since $B$ is free over $\mathcal O$, extensions
\[
 0\longrightarrow A\longrightarrow V\longrightarrow B
 \longrightarrow0
\]
with continuous $\mathcal O$-linear $G_K$-actions are classified, up to
isomorphisms inducing the identity on $A$ and $B$, by $H^1(G_K,T)$.
Indeed, choose an $\mathcal O$-linear splitting $s:B\to V$ and
identify $V$ with $A\oplus B$ via $(a,b)\mapsto a+s(b)$.
In these coordinates, the action is
\[
 g(a,b)=\bigl(ga+c(g)(gb),gb\bigr),
 \qquad c(g)(b)=g s(g^{-1}b)-s(b),
\]
where $c:G_K\to T$ is a continuous $1$-cocycle. Conversely,
every continuous $1$-cocycle defines an extension by this formula.
For two such cocycles $c$ and $c'$, every $\mathcal O$-linear
isomorphism between the corresponding extensions inducing the
identity on $A$ and $B$ has the form
\[
 \Phi_u(a,b)=(a-u(b),b),\qquad u\in T.
\]
It is $G_K$-equivariant if and only if
\[
 c'(g)-c(g)=g\cdot u-u\qquad\text{for all }g\in G_K.
\]
Thus the extension classes are in bijection with
$Z^1(G_K,T)/B^1(G_K,T)=H^1(G_K,T)$.
Similarly, the continuous residual extensions
are classified by $H^1(G_K,T/pT)$, using the natural identification
$T/pT\simeq\Hom_k(B/pB,A/pA)$. Under these identifications,
reduction of extensions corresponds to the natural map
\[
 H^1(G_K,T)\longrightarrow H^1(G_K,T/pT).
\]
It therefore suffices to prove that this map is surjective.
Since the cohomology sequence of
$0\to T\xrightarrow{p}T\to T/pT\to0$ gives an exact sequence
\[
 H^1(G_K,T)\longrightarrow H^1(G_K,T/pT)
 \longrightarrow H^2(G_K,T)\xrightarrow{\ p\ }H^2(G_K,T),
\]
it is enough to show that $H^2(G_K,T)[p]=0$, where $H^2(G_K,T)[p]=\{x\in H^2(G_K,T)\mid px=0\}$ denotes the subgroup annihilated by $p$.

Set
\[
 N=\Hom_{\mathcal O}(A,B(1)),\qquad D=N[1/p]/N,
\]
where $D$ has the discrete topology. Using the common underlying
$\mathcal O$-module of $B(1)$ and $B$, we have an isomorphism 
\begin{align}\label{firstiso}
 \Phi:N&\xrightarrow{\sim}\Hom_{\mathcal O}(T,\mathcal O)(1),\\
n&\longmapsto\bigl[t\longmapsto\tr(t\circ n)\bigr],\notag
\end{align}

where the trace is taken on $A$. In bases of $A$ and $B$,
$\tr(t\circ n)=\sum_{i,j}t_{ij}n_{ji}$, and hence the coefficients of
an $\mathcal O$-linear functional on $T$ determine a unique $n$.
Moreover, conjugation invariance of trace gives
\[
 \Phi(g\cdot n)(t)=\eps(g)\Phi(n)(g^{-1}\cdot t)
                 =(g\cdot\Phi(n))(t),
\]
which proves $G_K$-equivariance.
After inverting $p$ and reducing values modulo $\mathcal O$, we obtain
\[
 \begin{aligned}
 \Psi:D=N[1/p]/N&\xrightarrow{\sim}
 \Hom_{\mathrm{cts},\mathcal O}(T,F/\mathcal O)(1)=D_T,\\
 p^{-r}n+N&\longmapsto
 \bigl[t\longmapsto p^{-r}\tr(t\circ n)+\mathcal O\bigr].
 \end{aligned}
\]
By the isomorphism (\ref{firstiso}), the $F$-valued functional associated with
an element of $N[1/p]$ takes values in $\mathcal O$ if and only if
that element lies in $N$. Thus $\Psi$ is well-defined and injective.
Since $T$ is finite free, every $\mathcal O$-linear map
$f:T\to F/\mathcal O$ lifts, by lifting its values on a basis,
to a map $T\to F$ with a common $p$-power denominator.
This proves surjectivity. It also shows that $f$ vanishes on
$p^rT$ for some $r$, and hence is continuous.
The $G_K$-equivariance of $\Psi$ follows from that of $\Phi$.

By Proposition~\ref{prop:integral-local-duality}, we have an isomorphism of $\mathcal{O}$-modules
\[
 H^2(G_K,T)\simeq\Hom_{\mathcal O}(D^{G_K},F/\mathcal O).
\]
If $D^{G_K}$ is $p$-divisible, then multiplication by $p$ on its dual is
injective. Thus it remains to prove that $D^{G_K}$ is $p$-divisible. It suffices to show that every $d\in D^{G_K}$ has a $G_K$-invariant representative in $N[1/p]$. 

To see this, we write $d=p^{-r}n+N$ with
$n\in N$ and $r\geq0$, and argue by induction on $r$.
The case $r=0$ is immediate. For $r\geq1$, invariance of $d$
gives $gn-n\in p^rN$ for every $g\in G_K$, and hence $n+pN\in(N/pN)^{G_K}$. Under the canonical identification
$B(1)/pB(1)\simeq(B/pB)(1)$, the hypothesis on \eqref{eq:hom-surj} says that
$N^{G_K}\to(N/pN)^{G_K}$ is surjective. Thus we can choose $m\in N^{G_K}$ with $n-m=py$
for some $y\in N$. Then
\[
 d-(p^{-r}m+N)=p^{-(r-1)}y+N\in D^{G_K}.
\]
By induction, there is $x\in N[1/p]^{G_K}$ such that
$x+N=p^{-(r-1)}y+N$. Since $m\in N^{G_K}$, the element
$x+p^{-r}m$ is $G_K$-invariant and satisfies
\[
 (x+p^{-r}m)+N=d.
\]
This completes our proof.
\end{proof}

\section{Proof of Theorem \ref{thm:main}}\label{sec:socle}

We continue to assume that $K$ is nonarchimedean and
$\charac K\ne p$. For a $k[G_K]$-module $\bV$ which is finite-dimensional over $k$, the \emph{socle} $\Soc(\bV)$ of $\bV$ is the sum of its irreducible submodules. Note that it is semisimple and is nonzero when $\bV\ne0$.

\begin{theorem}\label{thm:socle}
Let $\bV$ be a finite-dimensional $k$-vector space with a continuous
$k$-linear $G_K$-action for the discrete topology. Fix a decomposition
\[
 \Soc(\bV)=\bigoplus_{i=1}^t\rhobar_i^{m_i},
\]
where the $\rhobar_i$ are pairwise nonisomorphic irreducible representations of $G_{K}$.
Then there exist a finite free $W(k)$-module $V$ with a continuous
$W(k)$-linear $G_K$-action for the $p$-adic topology, lifting $\bV$,
and a $G_K$-stable sublattice $S\subset V$ such that:
\begin{enumerate}[label=\textup{(\roman*)},leftmargin=2.4em]
 \item the inclusion $S\hookrightarrow V$ reduces to
$\Soc(\bV)\hookrightarrow\bV$;
\item $S=\bigoplus_{i=1}^t M_i^{m_i}$, where each
      $M_i$ lifts $\rhobar_i$ and the map
      $\End_{G_K}(M_i)\to\End_{G_K}(\rhobar_i)$ is surjective.
\end{enumerate}
\end{theorem}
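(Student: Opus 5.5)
We argue by induction on $\dim_k\bV$, following the sketch in the introduction. If $\bV$ is semisimple, then $\Soc(\bV)=\bV$. For each $i$ we choose a lift $M_i$ of $\rhobar_i$ as in Corollary~\ref{cor:local-simple}, and we take $V=S=\bigoplus_i M_i^{m_i}$. Both (i) and (ii) then hold trivially. This covers the base case $\dim_k\bV=1$ and, more generally, every semisimple $\bV$.

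Now let $\bV$ be non-semisimple, and set $\bS=\Soc(\bV)$, $\bQ=\bV/\bS$ and $\bB=\Soc(\bQ)$. Since $\bS\ne0$, we have $\dim_k\bQ<\dim_k\bV$, and $\bQ\neq 0$. Fix a decomposition $\bB=\bigoplus_j\tau_j^{n_j}$ into pairwise nonisomorphic irreducibles $\tau_j$. The induction hypothesis gives a lift $Q$ of $\bQ$ with a sublattice $B=\bigoplus_j N_j^{n_j}\subset Q$ reducing to $\bB\hookrightarrow\bQ$, where each $\End_{G_K}(N_j)\to\End_{G_K}(\tau_j)$ is surjective.

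We then choose the lifts $M_i$ of the constituents $\rhobar_i$ of $\bS$ compatibly with these choices:
\begin{itemize}
 \item if $\rhobar_i\simeq\tau_j(1)$ for some $j$ (this $j$ is unique, since the $\tau_j$ are pairwise nonisomorphic), we take $M_i=N_j(1)$, transporting the identification $N_j(1)/pN_j(1)\simeq\tau_j(1)$ along a fixed isomorphism $\tau_j(1)\simeq\rhobar_i$;
 \item otherwise, we take $M_i$ from Corollary~\ref{cor:local-simple}.
\end{itemize}
In the first case, surjectivity of $\End_{G_K}(M_i)\to\End_{G_K}(\rhobar_i)$ follows from the twist statement in Corollary~\ref{cor:local-simple} (the endomorphism rings are literally the same). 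So condition (ii) holds for $S=\bigoplus_iM_i^{m_i}$, and after fixing an isomorphism $S/pS\simeq\bS$ compatible with the decomposition, $S$ lifts $\bS$.

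The key step is to verify the hypothesis \eqref{eq:hom-surj} of Proposition~\ref{prop:extension} for $(A,B)=(S,Q)$. Let $f\in\Hom_{G_K}(\bS,\bQ(1))$. Since $\bS$ is semisimple, the image of $f$ is semisimple, so $f$ lands in $\Soc(\bQ(1))$. Twisting is an exact autoequivalence on $k[G_K]$-modules, so $\Soc(\bQ(1))=\bB(1)=\bigoplus_j\tau_j(1)^{n_j}$. We decompose $f$ into blocks according to these decompositions:
\begin{itemize}
 \item by Schur's lemma, blocks $\rhobar_i\to\tau_j(1)$ with $\rhobar_i\not\simeq\tau_j(1)$ vanish;
 \item for $\rhobar_i\simeq\tau_j(1)$, each block is an $n_j\times m_i$ matrix with entries in $\End_{G_K}(\tau_j(1))=\End_{G_K}(\rhobar_i)$, once the fixed identification is used.
\end{itemize}
Since source and target use the same lattice $N_j(1)=M_i$, each entry lifts to $\End_{G_K}(M_i)$ by surjectivity. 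Assembling these lifts gives a $G_K$-map $S\to B(1)\subset Q(1)$ whose reduction is $f$. Here we use that $B(1)\hookrightarrow Q(1)$ reduces to $\bB(1)\hookrightarrow\bQ(1)$, which follows from (i) for $Q$.

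Proposition~\ref{prop:extension} now lifts $0\to\bS\to\bV\to\bQ\to0$ to $0\to S\to V\to Q\to 0$ with $V$ finite free. The inclusion $S\hookrightarrow V$ reduces to $\bS=\Soc(\bV)\hookrightarrow\bV$, which gives (i), and (ii) was arranged above. The main obstacle is the bookkeeping in the previous paragraph: the identifications $M_i/pM_i\simeq\rhobar_i$ and $N_j/pN_j\simeq\tau_j$ must be chosen compatibly, so that the reduced blocks really are matrices of endomorphisms of a single module lifted by a single lattice. One also needs that $B\subset Q$ is saturated, so that $B(1)/pB(1)\to Q(1)/pQ(1)$ is injective. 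This holds because (i) for $Q$ says the inclusion reduces to an injection.
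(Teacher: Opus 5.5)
Your proposal is correct and follows the paper's proof essentially step for step. Both argue by induction on $\dim_k\bV$, handle the semisimple case with Corollary~\ref{cor:local-simple}, and take $M_i=N_j(1)$ whenever $\rhobar_i\simeq\tau_j(1)$. Both then verify \eqref{eq:hom-surj} for $(S,Q)$ by lifting $f\colon\bS\to\bQ(1)$ blockwise through $B(1)\subset Q(1)$, and conclude with Proposition~\ref{prop:extension}; your remarks on compatible reduction identifications spell out bookkeeping the paper also relies on.
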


\begin{proof}
	We proceed by induction on the dimension $\dim_k\bV$ of $\bV$. First of all, the case $\bV=0$ is immediate. If
$\bV$ is semisimple, then we choose the $M_i$ using
Corollary~\ref{cor:local-simple} and take
$S=V=\bigoplus_{i=1}^t M_i^{m_i}$.

Suppose that $\bV$ is not semisimple. Set
\[
 \bS=\Soc(\bV),\qquad \bQ=\bV/\bS,\qquad
 \bB=\Soc(\bQ).
\]
Since $\bS\ne0$,  we have $\dim_k \bQ <\dim_k \bV$. By induction, we obtain a lift $Q$
with a sublattice $B\subset Q$ lifting the embedded
socle $\bB$, and decompositions
\[
 B=\bigoplus_{j=1}^u(M'_j)^{a_j},\qquad
 \bB=\bigoplus_{j=1}^u(\rhobar'_j)^{a_j},
\]
where the $\rhobar'_j$ are pairwise nonisomorphic irreducible modules
and every map $\End_{G_K}(M'_j)\to\End_{G_K}(\rhobar'_j)$ is surjective.

For each irreducible constituent $\rhobar_i$ of $\bS$, we choose $M_i$
as follows. If $\rhobar_i(-1)\simeq\rhobar'_j$ for some $j$, then we set
\[
 M_i=M'_j(1)
\]
and choose an identification of its reduction with $\rhobar_i$.
Otherwise we choose $M_i$ using Corollary~\ref{cor:local-simple}. By Corollary~\ref{cor:local-simple}, every $M_i$ has
surjective reduction on endomorphisms. Put
\[
 S=\bigoplus_{i=1}^t M_i^{m_i}.
\]
Then we have $S/pS\simeq\bS$. We claim that
\begin{equation}\label{eq:key}
 \Hom_{G_K}(S,Q(1))\longrightarrow
 \Hom_{G_K}(\bS,\bQ(1))
 \quad\text{is surjective}.
\end{equation}
Let $f\in \Hom_{G_K}(\bS,\bQ(1))$. For each irreducible
summand $U$ of $\bS$, the image $f(U)$ is either zero or an
irreducible subrepresentation of $\bQ(1)$. Hence
$f(\bS)\subseteq\Soc(\bQ(1))=\bB(1)$.  Write $f_B:\bS\to\bB(1)$ for the resulting map. Relative to the
chosen decompositions of $\bS$ and $\bB(1)$, the $(j,i)$-block
$f_{ji}:\rhobar_i^{m_i}\to\bigl(\rhobar'_j(1)\bigr)^{a_j}$
is zero if $\rhobar_i\not\simeq\rhobar'_j(1)$, since
there are no nonzero maps between nonisomorphic irreducible modules.

Suppose that $\rhobar_i\simeq\rhobar'_j(1)$. The chosen
reduction identifications for $M_i=M'_j(1)$ identify each source
summand with $\rhobar'_j(1)$. The block $f_{ji}$ is then an
$a_j\times m_i$ matrix $(u_{\alpha\beta})$ with entries in
$\End_{G_K}(\rhobar'_j(1))$. Since
\[
 \End_{G_K}(M'_j(1))\longrightarrow
 \End_{G_K}(\rhobar'_j(1))
\]
is surjective, we can choose a lift
$\widetilde u_{\alpha\beta}\in\End_{G_K}(M'_j(1))$
of each entry. Then the matrix $(\widetilde u_{\alpha\beta})$ defines
a $G_{K}$-equivariant map
\[
 F_{ji}:M_i^{m_i}=\bigl(M'_j(1)\bigr)^{m_i}
       \longrightarrow\bigl(M'_j(1)\bigr)^{a_j},
\]
whose reduction is $f_{ji}$ under the chosen identifications.

Taking these lifts on the matching blocks and zero on all other
blocks gives a $G_{K}$-equivariant map $F_B:S\to B(1)$ reducing to $f_B$.
By condition \textup{(i)} of the induction hypothesis, the inclusion
$B(1)\hookrightarrow Q(1)$ reduces to the inclusion
$\bB(1)\hookrightarrow\bQ(1)$. Hence the composite
$F:S\xrightarrow{F_B}B(1)\hookrightarrow Q(1)$ reduces to $f$.
This proves \eqref{eq:key}.

Applying Proposition~\ref{prop:extension} to the pair $(S,Q)$ and the
given exact sequence
\[
 0\longrightarrow\bS\longrightarrow\bV
 \longrightarrow\bQ\longrightarrow0,
\]
we obtain an exact sequence
\[
 0\longrightarrow S\longrightarrow V\longrightarrow Q
 \longrightarrow0
\]
of finite free $W(k)$-modules with continuous $W(k)$-linear
$G_K$-actions and $G_K$-equivariant maps, reducing to that sequence.
The resulting
inclusion $S\subset V$ satisfies \textup{(i)}, and the chosen
decomposition of $S$ satisfies \textup{(ii)}, completing the induction.
\end{proof}

Finally, we prove our main theorem as follows.

\begin{proof}[Proof of Theorem~\ref{thm:main}]
If $K$ is nonarchimedean and $\charac K\ne p$, then it follows from Theorem~\ref{thm:socle}. If $\charac K=p$, then the assertion follows from \cite[Theorem~1.7]{Boc03}. Finally, for $K=\mathbb R$ or $\mathbb C$, it is \cite[Lemma~2.3]{Boc03}.
\end{proof}

\section*{Acknowledgements}
The author is grateful to Prof. Dr. Gebhard B\"ockle for helpful
correspondence in December 2024 and for pointing out that, to his
knowledge, the problem of lifting to the original Witt ring $W(k)$
was still open.

\begingroup
\renewcommand{\addcontentsline}[3]{}

\section*{Declaration of AI use}
During the preparation of this paper, the author used ChatGPT
to search for relevant literature for the introduction, to polish
the English writing of the paper, and to check the proofs.
\par
\endgroup

\end{document}